\newtheorem{thr}{Theorem}
\newtheorem{con}[thr]{Conjecture}
\theoremstyle{definition}
\theoremstyle{remark}
\newtheorem{obs}[thr]{Observation}
\numberwithin{equation}{section}
\begin{document}

\title{Counterexamples on spectra of sign patterns}

\author{Yaroslav Shitov}
\address{National Research University Higher School of Economics, 20 Myasnitskaya Ulitsa, Moscow 101000, Russia}
\email{yaroslav-shitov@yandex.ru}

\subjclass[2000]{15A18, 15B35}
\keywords{Matrix theory, eigenvalues, sign pattern}

\begin{abstract}
An $n\times n$ \textit{sign pattern} $S$, which is a matrix with entries $0,+,-$, is called \textit{spectrally arbitrary} if any monic real polynomial of degree $n$ can be realized as a characteristic polynomial of a matrix obtained by replacing the non-zero elements of $S$ by numbers of the corresponding signs. A sign pattern $S$ is said to be a \textit{superpattern} of those matrices that can be obtained from $S$ by replacing some of the non-zero entries by zeros. We develop a new technique that allows us to prove spectral arbitrariness of sign patterns for which the previously known \textit{Nilpotent Jacobian} method does not work. Our approach leads us to solutions of numerous open problems known in the literature. In particular, we provide an example of a sign pattern $S$ and its superpattern $S'$ such that $S$ is spectrally arbitrary but $S'$ is not, disproving a conjecture proposed in 2000 by Drew, Johnson, Olesky, and van den Driessche.
\end{abstract}

\maketitle

\section{Conjectures}

The study of spectra of matrix patterns deserved a significant amount of attention in recent publications. 
The conjecture mentioned in the abstract appeared in one of the foundational papers on this topic~(\cite{DJOD}), and many subsequent works proved it in different special cases~(\cite{CF, CM, GS2, KSVW, MTD}). One of the known sufficient conditions for superpatterns to be spectrally arbitrary is the \textit{Nilpotent Jacobian} condition~(\cite{BMOD, DJOD}), which allowed to solve several intriguing problems on this topic~(\cite{CKSV, GS, Per}). Despite these efforts, the \textit{superpattern conjecture} remained open by now, and we mention~\cite{COD, McY, Mel} as other recent work disussing this conjecture.

\begin{con}\label{conj11}\emph{(Conjecture 16 in~\cite{DJOD}.)}
If $S$ is a minimal spectrally arbitrary sign pattern, then any superpattern of $S$ is spectrally arbitrary.
\end{con}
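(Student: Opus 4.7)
The plan is to \emph{refute} Conjecture~\ref{conj11} by exhibiting an explicit pair $(S, S')$ in which $S$ is a minimal spectrally arbitrary sign pattern and $S'$ is a superpattern of $S$ that fails to be spectrally arbitrary, since the abstract announces exactly this outcome.

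My first step would be to narrow the search for $S$. The Nilpotent Jacobian method, when it succeeds on a pattern $S$, is known to imply spectral arbitrariness of every superpattern of $S$; consequently, any counterexample must come from a pattern whose spectral arbitrariness requires a genuinely different argument. The abstract's mention of a new technique extending beyond the Nilpotent Jacobian approach is precisely the tool that would let me certify candidate patterns of small order sitting outside that class. I would enumerate such candidate patterns $S$ and, for each, inspect the finitely many superpatterns obtained by activating one additional signed entry.

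Next, for a given candidate superpattern $S'$, I would parametrize its realizations and regard the characteristic polynomial coefficients $(c_1,\ldots,c_n)$ as polynomial functions of the nonzero entries, restricted to the open orthant determined by the signs of $S'$. To show $S'$ is not spectrally arbitrary, I would try to produce an algebraic inequality on $(c_1,\ldots,c_n)$ that must hold throughout this sign-constrained domain but fails for some explicit target polynomial $p(x)$. A natural mechanism is that the additional signed entry, together with the zero pattern of $S$, forces a sum-of-squares-type identity among the coefficient polynomials, so that the image of the parameter map misses an entire half-space of $\mathbb R^n$.

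The main obstacle will be rigorously ruling out every signed realization of the chosen $p(x)$ by $S'$, rather than merely failing to find one. Because the parameter region is open and semi-algebraic, compactness arguments do not suffice; I would need either an elimination-theoretic derivation (resultants or Gr\"obner bases) yielding an explicit inequality satisfied by all realizable coefficient tuples, or a direct algebraic argument tailored to the specific structure of $S'$. One would finally confirm that $S$ itself realizes $p(x)$, which demands the new technique alluded to in the abstract, thereby closing the counterexample.
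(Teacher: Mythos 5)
Your high-level plan (exhibit a minimal spectrally arbitrary $S_0$ with a non-spectrally-arbitrary superpattern $S'$) matches the paper's goal, but nearly all of the actual machinery you propose diverges from what the paper does, and the proposed machinery would be both harder and, in places, misaimed.

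\textbf{How the paper actually rules out $S'$.} You anticipate needing elimination theory, Gr\"obner bases, or a sum-of-squares identity among the coefficient polynomials to rule out every signed realization of a target polynomial. The paper's obstruction is far simpler and more structural: the superpattern $T'$ of a certain $6\times 6$ pattern $T$ does not even admit a nilpotent realization, because the coefficients of $t^5$ and $t^3$ in the characteristic polynomial of any matrix $R$ with pattern $T'$ are $-(r_{11}+r_{22})$ and $-r_{12}r_{23}r_{31}+(r_{11}+r_{22})r_{56}r_{65}$, which cannot both vanish when all entries are nonzero. Since a block-diagonal matrix is nilpotent only if every block is, this kills spectral arbitrariness of $S'=\operatorname{diag}(T',D,\ldots,D)$ in one stroke; no target polynomial other than $t^n$, and no semialgebraic elimination, is needed.

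\textbf{How the paper actually certifies $S$.} Your plan defers the positive half (``confirm that $S$ itself realizes $p(x)$'') to ``the new technique alluded to in the abstract'' without identifying it; you also phrase it as realizing a single polynomial rather than all monic degree-$n$ polynomials. The paper's new technique is a block-diagonal factorization argument: $S=\operatorname{diag}(T,D,D,D,D,D)$ where $D$ is any $2\times 2$ spectrally arbitrary pattern. A pigeonhole argument on the quadratic factors of a degree-$16$ monic polynomial shows one can always peel off a cubic-in-$t^2$-shaped divisor of the right form (Observations~\ref{obs2}--\ref{obs4}) to feed the $T$ block, and the remaining $D$ blocks, being spectrally arbitrary, absorb the rest (Observation~\ref{obs5}). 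This is the genuinely new ingredient your sketch never pins down.

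\textbf{The minimality wrinkle.} You assert the strategy requires finding an $S$ that is itself minimal spectrally arbitrary. The paper sidesteps verifying minimality: any spectrally arbitrary $S$ sits above \emph{some} minimal $S_0$, and since $S'$ is a superpattern of $S$ it is a fortiori a superpattern of $S_0$, so the pair $(S_0,S')$ already refutes the conjecture. Without this observation your plan inherits an additional (and potentially painful) verification burden.

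\textbf{The search-space issue.} You propose enumerating small candidate patterns outside the Nilpotent Jacobian class and checking their one-entry superpatterns. The actual counterexample $S$ has order $16$, built as $\operatorname{diag}(T,D^5)$; a brute enumeration of small orders would not reach it. The construction is designed, not found by search, and the design principle (a block whose coefficient structure ties $a_3$ to $a_5$, padded by enough spectrally arbitrary $2\times 2$ blocks to make divisor-peeling work) is precisely what your proposal lacks.
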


We note that this conjecture involves the concept of a \textit{minimal} spectrally arbitrary sign pattern, that is, a sign  pattern $S$ which is spectrally arbitrary but is not a superpattern of any other spectrally arbitrary sign pattern. In our paper, we construct a sign pattern $S$ and its superpattern $S'$ such that $S$ is spectrally arbitrary but $S'$ is not. We do not investigate the question of minimality of $S$, but $S$ is anyway a superpattern of some minimal spectrally arbitrary pattern $S_0$, and the pair $(S_0,S')$ provides a counterexample to Conjecture~\ref{conj11} even if $S$ is not minimal. 


As said above, the \textit{Nilpotent Jacobian} condition is sufficient for a zero pattern (and every superpattern of it) to be spectrally arbitrary. Our results show that this condition is not necessary, answering the questions posed explicitly in~\cite{BVT, DJOD, McY}.

As a byproduct of our approach, we obtain solutions of two other related problems on the topic. Namely, we construct a sign pattern $U$ such that $\operatorname{diag}(U,U)$ is spectrally arbitrary but $U$ itself is not. This gives a solution to the problem posed in Section~5 in~\cite{DOD} and an answer to Question~3 in~\cite{COD}.

An $n\times n$ sign pattern $S$ is said to allow \textit{arbitrary refined inertias} if, for any family $n_+,n_-,n_0,n_i$ of nonnegative integers such that $n_++n_-+n_0+2n_i=n$, there is a matrix with sign pattern $S$ which has $n_+$ eigenvalues with positive real part, $n_-$ eigenvalues with negative real part, $n_0$ zero eigenvalues, and $n_i$ purely imaginary eigenvalues. We provide an example of a sign pattern that allows arbitrary refined inertias but is not spectrally arbitrary, which solves the problem asked in Section~5 in~\cite{DOD} and in Section~5 in~\cite{KOSD}.

\section{Counterexamples}

We define the sign patterns
$$T=\begin{pmatrix}
+&+&0&0&0&0\\
-&-&+&0&0&0\\
0&0&0&+&0&0\\
0&0&0&0&+&0\\
-&-&0&0&0&+\\
+&+&+&0&-&0
\end{pmatrix},\,\,\,
T'=\begin{pmatrix}
+&+&0&0&0&0\\
-&-&+&0&0&0\\
+&0&0&+&0&0\\
0&0&0&0&+&0\\
-&-&0&0&0&+\\
+&+&+&0&-&0
\end{pmatrix},$$
which agree at every entry except $(3,1)$, so $T'$ is indeed a superpattern of $T$. Also, we fix any $2\times 2$ spectrally arbitrary pattern\footnote{In fact, spectrally arbitrary $n\times n$ sign patterns exist for all $n\geqslant 2$, see~\cite{MOTD}.} and denote it by $D$. Let us prove several observations, which we will put together in the theorem below. 

\begin{obs}\label{obs1}
Let $R$ be a matrix obtained from $T'$ by replacing the signs with non-zero real numbers. Then $R$ is not nilpotent.
\end{obs}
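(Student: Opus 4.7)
The plan is to derive a contradiction from the assumption that $R$ is nilpotent by showing that two specific coefficients of the characteristic polynomial cannot simultaneously vanish under the sign constraints imposed by $T'$. For a nilpotent matrix, every elementary symmetric function $E_k$ of the eigenvalues must be zero; I will focus on $E_1$ (the trace) and $E_3$ (the sum of principal $3\times 3$ minors), since the sparsity of $T'$ makes both tractable.

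First I would observe that $E_1 = R_{11}+R_{22}$, because the other diagonal entries of $T'$ are zero; so nilpotency forces $R_{22}=-R_{11}$. Next I would enumerate the $\binom{6}{3}=20$ principal $3\times 3$ submatrices of $R$. A quick inspection of the zero pattern of $T'$ shows that all but three of these submatrices contain a zero row or a zero column and therefore have vanishing determinant; the only contributing subsets are $\{1,2,3\}$, $\{1,5,6\}$, and $\{2,5,6\}$. Direct expansion of these three minors gives
$$E_3 \;=\; R_{12}R_{23}R_{31} \;-\; (R_{11}+R_{22})\,R_{56}R_{65}.$$

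Substituting $R_{11}+R_{22}=0$ collapses the second term and leaves $E_3 = R_{12}R_{23}R_{31}$. By the sign pattern $T'$, each of $R_{12}$, $R_{23}$, $R_{31}$ is strictly positive, so $E_3>0$, which contradicts $E_3=0$ and completes the argument.

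The main obstacle is nothing deep — it is just a careful but routine bookkeeping check that the remaining seventeen principal $3\times 3$ submatrices really do have a zero row or column, so the minor enumeration above is exhaustive. This is where the specific structure of $T'$ enters decisively: the whole argument pivots on the unique $3$-cycle $1\to 2\to 3\to 1$ made available by the $(3,1)$ entry of $T'$. In the original pattern $T$ this entry is zero, the $3$-cycle disappears, and the obstruction vanishes; this is exactly the asymmetry between $T$ and its superpattern $T'$ that the rest of the paper will exploit.
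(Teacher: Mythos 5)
Your argument is correct and is essentially the paper's proof: the coefficients of $t^5$ and $t^3$ in $\det(tI-R)$ are $-E_1$ and $-E_3$ respectively, so insisting that $E_1$ and $E_3$ vanish simultaneously is exactly what the paper does, and your enumeration of the three contributing $3\times 3$ principal minors recovers the paper's formula $-r_{12}r_{23}r_{31}+(r_{11}+r_{22})r_{56}r_{65}$ for the $t^3$ coefficient. The only (inessential) difference is that you invoke the signs to conclude $R_{12}R_{23}R_{31}>0$, whereas the paper only needs that this product is nonzero.
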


\begin{proof}
The coefficients of $t^3$ and $t^5$ in the characteristic polynomial of $R$ are equal to $-r_{12} r_{23} r_{31} + (r_{11} + r_{22}) r_{56} r_{65}$ and $-r_{11}-r_{22}$, respectively. These coefficients can vanish simultaneously only if $r_{12} r_{23} r_{31}=0$.
\end{proof}

\begin{obs}\label{obs11}
Let $R$ be a matrix obtained from $T$ by replacing the signs with non-zero real numbers. Assume that $t^6+a_5t^5+a_4t^4+a_3t^3+a_2t^2+a_1t+a_0$ is the characteristic polynomial of $R$. Then $a_3=0$ if and only if $a_5=0$. 
\end{obs}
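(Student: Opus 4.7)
The plan is to express both coefficients in terms of principal minors of $R$ and to exploit the sparsity of $R$ in order to evaluate the relevant sums. Writing $\det(tI-R)=\sum_{k=0}^{6}(-1)^k E_k(R)\,t^{6-k}$, where $E_k(R)$ denotes the sum of the $k\times k$ principal minors of $R$, we obtain the identifications $a_5=-E_1(R)$ and $a_3=-E_3(R)$. The diagonal of $R$ has only two nonzero entries, so the first identity immediately gives $a_5=-(r_{11}+r_{22})$.

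The main step is to evaluate $E_3(R)$. Here I would go through the $\binom{6}{3}=20$ principal $3\times 3$ submatrices of $R$ and show that all but two of them have a zero row or a zero column, forced by the extreme sparsity of rows $3$ and $4$ of $R$ (each has a single nonzero entry, and neither of those entries lies on the diagonal). More precisely, whenever the index set $S\subset\{1,\dots,6\}$ contains $3$ but not $4$, the row indexed by $3$ of $R[S,S]$ is zero; symmetrically, if $S$ contains $4$ but not $5$, the corresponding row vanishes; and several further index sets leave the principal submatrix with a zero column. After discarding these cases, the only surviving index sets are $\{1,5,6\}$ and $\{2,5,6\}$, whose principal minors are $-r_{11}r_{56}r_{65}$ and $-r_{22}r_{56}r_{65}$ respectively. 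Summing yields
\[
E_3(R)=-(r_{11}+r_{22})\,r_{56}r_{65}, \qquad \text{hence} \qquad a_3=(r_{11}+r_{22})\,r_{56}r_{65}.
\]

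Comparing with $a_5=-(r_{11}+r_{22})$ and using the fact that $r_{56}$ and $r_{65}$ are nonzero (both positions carry a nonzero sign in $T$), we conclude that the vanishing of $a_3$ and of $a_5$ are both equivalent to $r_{11}+r_{22}=0$, which is the desired equivalence. I expect the bookkeeping of the twenty principal minors to be the only nontrivial part; conceptually, the reason it works out is that the only directed cycles of length $\leqslant 3$ in the digraph of $R$ are the two self-loops at vertices $1$ and $2$ and the $2$-cycle $5\leftrightarrow 6$, so the only way to form a nonzero principal $3\times 3$ minor is to combine one of these self-loops with that $2$-cycle.
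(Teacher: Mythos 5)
Your proof is correct and follows the same approach as the paper, which simply states the two coefficients $a_5=-(r_{11}+r_{22})$ and $a_3=(r_{11}+r_{22})r_{56}r_{65}$ and observes they vanish together; you fill in the derivation via principal minors. One small slip in your closing remark: the digraph of $T$ also contains the $2$-cycle $1\leftrightarrow 2$ (entries $(1,2)$ and $(2,1)$ are both nonzero), which you omitted from your list of short cycles, though this does not affect the computation since that $2$-cycle cannot be paired with a disjoint self-loop to form a $3$-vertex cycle cover.
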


\begin{proof}
The coefficients of $t^3$ and $t^5$ in the characteristic polynomial of $R$ are equal to $(r_{11} + r_{22}) r_{56} r_{65}$ and $-r_{11}-r_{22}$, respectively. Therefore, the former of them is zero if and only if the latter is zero.
\end{proof}

\begin{obs}\label{obs12}
The sign pattern $\operatorname{diag}(T,D)$ is not spectrally arbitrary.
\end{obs}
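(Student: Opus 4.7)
The plan is to exhibit a single monic polynomial of degree $8$ that $\operatorname{diag}(T,D)$ cannot realize. The most economical choice is $p(t)=t^8$: a realization of $\operatorname{diag}(T,D)$ is block diagonal with blocks $R$ of sign pattern $T$ and $R_D$ of sign pattern $D$, so its characteristic polynomial factors as $\chi_R(t)\chi_{R_D}(t)$. The unique monic factorization $t^8=t^6\cdot t^2$ forces $\chi_R(t)=t^6$, that is, $R$ nilpotent (while $\chi_{R_D}(t)=t^2$ is achievable because $D$ is spectrally arbitrary). The task therefore reduces to proving that $T$ admits no nilpotent realization.

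To do so, I would compute the coefficients $a_5$, $a_4$, $a_2$ of the characteristic polynomial of a generic realization $R$ of $T$ under the assumption that all three vanish. The coefficient $a_5=-(r_{11}+r_{22})$ gives $r_{22}=-r_{11}$. The coefficient $a_4$ is the sum of the $2\times 2$ principal minors of $R$, and the structure of $T$ leaves only the minors on $\{1,2\}$ and $\{5,6\}$ nonzero, so $a_4=(r_{11}r_{22}-r_{12}r_{21})-r_{56}r_{65}$ and $a_4=0$ yields
\[
r_{11}r_{22}-r_{12}r_{21}\;=\;r_{56}r_{65}.
\]
For $a_2$, the sum of the $4\times 4$ principal minors of $R$, a cycle-cover analysis of the digraph of $T$ shows that only $\{1,2,5,6\}$ and $\{3,4,5,6\}$ contribute. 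The former gives $(r_{11}r_{22}-r_{12}r_{21})(-r_{56}r_{65})$ by block triangularity, and the latter, coming from the unique directed $4$-cycle through those vertices, gives $-r_{34}r_{45}r_{56}r_{63}$. Substituting the identity above, $a_2=-(r_{56}r_{65})^2-r_{34}r_{45}r_{56}r_{63}$. The signs prescribed by $T$ force $r_{34},r_{45},r_{56},r_{63}>0$, so both summands are strictly negative, contradicting $a_2=0$.

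The main technical hurdle is the enumeration of the nonzero $4\times 4$ principal minors of $R$: one must verify, by inspecting the digraph of $T$, that the only $4$-element vertex sets admitting a spanning disjoint union of directed cycles are $\{1,2,5,6\}$ and $\{3,4,5,6\}$. This rests on the small number of self-loops (only at $1$ and $2$), the absence of any directed $3$-cycle, and the fact that the only $2$-cycles lie on $\{1,2\}$ and $\{5,6\}$. Once that enumeration is complete, the sign cancellation forcing $a_2<0$ is immediate.
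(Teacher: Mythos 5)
Your plan cannot work: the sign pattern $T$ \emph{does} admit a nilpotent realization, so $t^8$ is in fact realizable by $\operatorname{diag}(T,D)$. This is a consequence of Observation~\ref{obs2} with $b=c=d=0$, which explicitly produces a matrix with sign pattern $T$ and characteristic polynomial $t^6$. (It must also be so for the paper's scheme to be coherent at all: Observation~\ref{obs5} makes $\operatorname{diag}(T,D,\dots,D)$ spectrally arbitrary, hence it realizes $t^{16}$, forcing the $T$-block to be nilpotent.)

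The concrete error is in the enumeration of $4\times 4$ principal minors. The digraph of $T$ has a second directed $4$-cycle, namely $2\to 3\to 4\to 5\to 2$ (arcs $(2,3),(3,4),(4,5),(5,2)$ are all nonzero in $T$), so $\{2,3,4,5\}$ also contributes. Its contribution to $a_2$ is $-r_{23}r_{34}r_{45}r_{52}$, and since $r_{52}<0$ while $r_{23},r_{34},r_{45}>0$, this term is \emph{positive}; it can (and does) cancel the two negative terms you found. For a numerical check, taking $x_1=2,x_3=1,x_8=x_9=1$ in the parametrization of Observation~\ref{obs2} gives, for the resulting nilpotent $R$, minors $-1$, $+2$, $-1$ on $\{1,2,5,6\}$, $\{2,3,4,5\}$, $\{3,4,5,6\}$ respectively, summing to $a_2=0$ as required.

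The paper's proof avoids this by choosing a different degree-$8$ polynomial, $f=(t^2+t+1)(t^2-t+2)(t^2+1)(t^2-1)$, and using the constraint of Observation~\ref{obs11} (for a realization of $T$, $a_3=0$ iff $a_5=0$); one then checks directly that no monic degree-$6$ divisor of $f$ over $\mathbb{R}$ satisfies that constraint. If you want to salvage your own line of argument, $t^8$ is the wrong test polynomial precisely because $T$ is potentially nilpotent; a target polynomial must be chosen so that every candidate degree-$6$ factor violates a genuine obstruction such as the one in Observation~\ref{obs11}.
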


\begin{proof}
If $f=(t^2+t+1) (t^2-t+2) (t^2+1) (t^2-1)$ is realizable as the characteristic polynomial of a matrix with sign pattern $\operatorname{diag}(T,D)$, then $f$ has a divisor realizable as the characteristic polynomial of a matrix with sign pattern $T$. A straightforward checking of possible cases leads to a contradiction with Observation~\ref{obs11}.
\end{proof}

In order to proceed, we consider the matrix
$$X=\begin{pmatrix}
x_1&1&0&0&0&0\\
-x_4&-x_2&1&0&0&0\\
0&0&0&1&0&0\\
0&0&0&0&1&0\\
-x_6&-x_5&0&0&0&1\\
x_7&x_8&x_9&0&-x_3&0
\end{pmatrix},$$
whose sign pattern is $T$ whenever the $x_i$'s take positive values.

\begin{obs}\label{obs2}
For all $b,c,d$, there are positive values of the $x_i$'s such that the characteristic polynomial of $X$ equals $(t^2+b)(t^2+c)(t^2+d)$.
\end{obs}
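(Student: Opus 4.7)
The plan is to exploit the observations already established: setting $x_1 = x_2 = x$ forces the coefficient $a_5$ of $t^5$ in the characteristic polynomial of $X$ to vanish, and by Observation~\ref{obs11} this automatically makes $a_3 = 0$ as well. Writing $A := b+c+d$, $B := bc+bd+cd$, $C := bcd$, it then remains to arrange $a_1 = 0$ and to match $a_4 = A$, $a_2 = B$, $a_0 = C$.

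I would compute the remaining coefficients using the principal-minor formula $a_{6-k} = (-1)^k e_k(X)$. Most principal minors vanish outright because the sparsity pattern of $X$ forces a zero row or column in the corresponding submatrix, so only a handful of contributions survive. After the required bookkeeping and substituting $x_1 = x_2 = x$, one obtains
\begin{align*}
a_4 &= x_3 + x_4 - x^2,\\
a_2 &= x_5 - x_9 + x_3(x_4 - x^2),\\
a_1 &= x_6 - xx_5 - x_8,\\
a_0 &= x^2 x_9 + xx_8 - x_4 x_9 - x_7.
\end{align*}

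Setting $x_3 = 1$, the four required equations become triangular and determine $x_4, x_5, x_7, x_6$ uniquely in terms of $x, x_8, x_9$:
$x_4 = x^2 + A - 1$, $x_5 = x_9 + B - A + 1$, $x_7 = xx_8 + (1-A)x_9 - C$, and $x_6 = xx_5 + x_8$. Now choose $x > 0$ large enough that $x_4 > 0$, then $x_9 > 0$ large enough that $x_5 > 0$, and finally $x_8 > 0$ large enough that $x_7 > 0$; the positivity of $x_6$ follows automatically since $x, x_5, x_8$ are all positive. The main obstacle is the determinant bookkeeping; once the four formulas above are secured, the triangular structure of the constraints makes the existence of positive solutions immediate.
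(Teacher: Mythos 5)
Your proposal is correct and takes essentially the same route as the paper: after substituting $x_3=1$, your formulas for $x_4,x_5,x_6,x_7$ in terms of $x(=x_1=x_2)$, $x_8$, $x_9$ agree exactly with the explicit parametrization given in the paper, and the positivity argument (large $x$, then large $x_9$, then large $x_8$, with $x_6>0$ automatic) is the same.
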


\begin{proof}
First, we assume that $x_1,x_3,x_8,x_9$ are arbitrary and check that the matrix $X$ defined by
$x_2 = x_1$, $x_4 = b + c + d + x_1^2 - x_3$,
$x_5 =  b c + b d + c d - b x_3 - c x_3 - d x_3 + x_3^2 + x_9$,
$x_6 = b c x_1 + b d x_1 + c d x_1 - b x_1 x_3 - c x_1 x_3 - d x_1 x_3 + x_1 x_3^2 + x_8 + x_1 x_9$,
$x_7 = -b c d + x_1 x_8 - b x_9 - c x_9 - d x_9 + x_3 x_9$
has a desired characteristic polynomial. Picking $x_3=1$ and defining $x_1$ as a large enough positive number, we make $x_2, x_3, x_4$ positive regardless of the values of $x_8, x_9$. Finally, the choice of $x_9$ allows us to make $x_5$ positive, and now $x_6,x_7$ tend to $+\infty$ as $x_8$ gets large. 
\end{proof}

\begin{obs}\label{obs3}
If $a_3/a_5>0$, then there are positive values of the $x_i$'s such that the characteristic polynomial of $X$ equals $t^6+a_5t^5+a_4t^4+a_3t^3+a_2t^2+a_1t+a_0$.
\end{obs}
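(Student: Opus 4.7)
The plan is to extend the construction in the proof of Observation~\ref{obs2} to accommodate a nonzero coefficient $a_5$. Reading off from the proof of Observation~\ref{obs11} applied to $X$, we obtain the identities $a_5 = x_2 - x_1$ and $a_3 = x_3(x_2 - x_1) = x_3 a_5$. In particular $a_3/a_5 = x_3$, so the hypothesis $a_3/a_5 > 0$ dictates that we set $x_3 := a_3/a_5 > 0$. To match $a_5$, I would take $x_1$ as a positive parameter to be chosen large and define $x_2 := x_1 + a_5$, which is itself positive once $x_1 > \max(0, -a_5)$.

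With $x_1, x_2, x_3$ so fixed, a direct inspection of the $2 \times 2$ principal minors of $X$ shows that the only nonzero contributions to $a_4$ come from the index sets $\{1,2\}$ and $\{5,6\}$, giving $a_4 = -x_1 x_2 + x_4 + x_3$. This forces $x_4 := a_4 + x_1 x_2 - x_3$, which is positive for $x_1$ sufficiently large. It remains to match the coefficients $a_0, a_1, a_2$ using the variables $x_5, x_6, x_7$, with $x_8, x_9$ serving as free auxiliary parameters. Following the recursive pattern of Observation~\ref{obs2}, I would expand $\det(tI - X)$ and solve, in sequence, $a_2$ for $x_5$ (linear in $x_9$), then $a_1$ for $x_6$ (linear in $x_8, x_9$), and finally $a_0$ for $x_7$ (linear in $x_8, x_9$), each as an affine expression in the free parameters with polynomial coefficients in the already-fixed $x_1, x_2, x_3, x_4$.

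I would conclude by arguing positivity in the order $x_1 \to \infty$, then $x_9 \to \infty$, then $x_8 \to \infty$, mirroring the final sentence of the proof of Observation~\ref{obs2}. The main obstacle is to carry out the determinant expansion carefully enough to verify that the leading terms in the expressions for $x_5, x_6, x_7$ with respect to these parameters are positive; by structural analogy with Observation~\ref{obs2}, where the coefficients of the free parameters $x_9$ in $x_5$, $x_8$ in $x_6$, and $x_8$ in $x_7$ came out $+1$, $+1$, and $+x_1$ respectively, the signs of these leading terms should be unaffected by the switch from $x_2 = x_1$ to $x_2 = x_1 + a_5$, and the positivity argument then carries over verbatim.
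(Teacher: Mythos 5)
Your proposal matches the paper's proof: the paper also sets $x_2 = a_5 + x_1$, $x_3 = a_3/a_5$, determines $x_4$ (your $x_4 = a_4 + x_1 x_2 - x_3$ is algebraically identical to the paper's $(-a_3 + a_4 a_5 + a_5^2 x_1 + a_5 x_1^2)/a_5$), solves sequentially for $x_5, x_6, x_7$, and argues positivity by taking $x_1$ large, then choosing $x_9$ to make $x_5>0$, then letting $x_8 \to \infty$. The only difference is that the paper writes out the explicit formulas for $x_5, x_6, x_7$ rather than deferring the determinant expansion, and these confirm your structural-analogy claim: the coefficient of $x_9$ in $x_5$ is $1$, of $x_8$ in $x_6$ is $1$, and of $x_8$ in $x_7$ is $x_1$, exactly as in Observation~\ref{obs2}.
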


\begin{proof}
Again, we assume that $x_1,x_8,x_9$ are arbitrary and check that the matrix $X$ defined by
$x_2 = a_5 + x_1$,
$x_3 = a_3/a_5$,
$x_4 = (-a_3 + a_4 a_5 + a_5^2 x_1 + a_5 x_1^2)/a_5$,
$x_5 = (a_3^2 - a_3 a_4 a_5 + a_2 a_5^2 + a_5^2 x_9)/a_5^2$,
$x_6 = (a_1 a_5^2 + a_3^2 x_1 - a_3 a_4 a_5 x_1 + a_2 a_5^2 x_1 + a_5^2 x_8 + a_5^3 x_9 + a_5^2 x_1 x_9)/a_5^2,$
$x_7 = (-a_0 a_5 + a_5 x_1 x_8 + a_3 x_9 - a_4 a_5 x_9)/a_5$
has a desired characteristic polynomial. Defining $x_1$ as a large enough positive number, we make $x_2, x_3, x_4$ positive regardless of the values of $x_8, x_9$. As in the proof of the previous observation, the choice of $x_9$ allows us to make $x_5$ positive, and then $x_6,x_7$ tend to $+\infty$ as $x_8$ gets large.
\end{proof}

\begin{obs}\label{obs4}
Let $f$ be a monic real polynomial of degree $16$. Then $f$ has a divisor realizable as the characteristic polynomial of a matrix with sign pattern $T$.
\end{obs}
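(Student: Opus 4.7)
The plan is to factor $f$ as a product $f = q_1 \cdots q_8$ of eight monic real quadratics $q_j = t^2 + \alpha_j t + \beta_j$, and then to select three of them whose product (of degree $6$) is realizable by Observation~\ref{obs2} or Observation~\ref{obs3}. Each complex conjugate pair of roots of $f$ gives an irreducible quadratic factor with $\beta_j > 0$; the real roots of $f$ must be paired by us, and I would pair positives with positives, negatives with negatives, and zeros with zeros whenever possible, resorting to a positive-with-negative cross-pair (which produces $\beta_j < 0$) only as a last resort.

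A short parity analysis of the numbers $L_+,L_-,L_0$ of positive, negative, and zero real roots of $f$ shows that this strategy leaves at most one quadratic with $\beta_j < 0$: such a cross-pair is unavoidable precisely when $L_+$ and $L_-$ are both odd and $L_0 = 0$ (and in all other cases the leftover, if any, can be absorbed into a pair of the form $t^2$ or $t(t\pm r)$, which has $\beta_j = 0$). In particular, at least seven ``good'' quadratics satisfy $\beta_j \geq 0$.

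Among these at-least-seven good quadratics, let $k$ be the number with $\alpha_j = 0$. If $k \geq 3$, the product of any three such quadratics has the form $(t^2 + \beta_{i_1})(t^2 + \beta_{i_2})(t^2 + \beta_{i_3})$ and is realizable by Observation~\ref{obs2}. If $k \leq 2$, then at least five good quadratics have $\alpha_j \neq 0$, and by the pigeonhole principle three of them share the sign of $\alpha_j$. A direct expansion gives that the $t^5$-coefficient of their product is $\alpha_{i_1} + \alpha_{i_2} + \alpha_{i_3}$ and the $t^3$-coefficient is $\alpha_{i_1}(\beta_{i_2}+\beta_{i_3}) + \alpha_{i_2}(\beta_{i_1}+\beta_{i_3}) + \alpha_{i_3}(\beta_{i_1}+\beta_{i_2}) + \alpha_{i_1}\alpha_{i_2}\alpha_{i_3}$; with the $\beta_{i_\ell}$ nonnegative and the $\alpha_{i_\ell}$ sharing a common nonzero sign, both coefficients are nonzero and share that sign, so Observation~\ref{obs3} applies.

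I expect the main obstacle to be the parity bookkeeping that bounds the number of bad quadratics by one; once this is in place, the pigeonhole-and-sign argument is elementary. The hypothesis $\deg f = 16$ enters precisely here: seven is the smallest number of good quadratics for which the pigeonhole principle guarantees three same-sign $\alpha_j$'s among the at-least-five with $\alpha_j \neq 0$, and eight quadratics in total are what is needed to produce seven good ones.
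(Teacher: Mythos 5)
Your proposal is correct and follows essentially the same approach as the paper: factor $f$ into eight monic quadratics, re-pair real roots so that at most one quadratic has negative constant term, then apply the pigeonhole principle to the signs of the linear coefficients among the remaining seven and invoke Observation~\ref{obs2} or~\ref{obs3}. The only difference is that you spell out two steps the paper leaves terse, namely the parity bookkeeping showing only one cross-pair can be forced and the explicit $t^3$- and $t^5$-coefficient formulas that verify the sign condition $a_3/a_5>0$; these are worthwhile details but do not change the argument.
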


\begin{proof}
Clearly, $f$ is the product of eight quadratics of the form $t^2+a_it+b_i$. If $b_i$ is negative, then such a quadratic has two roots of different signs, and this allows us to assume that at least seven of the initial quadratics have their $b_i$'s nonnegative. By the pigeonhole principle, among these seven quadratics there are three that either have all $a_i$'s positive, or all $a_i$'s negative, or all $a_i$'s zero. In the first two cases, the product of these three quadratics is a polynomial as in Observation~\ref{obs3}, and the case of zero $a_i$'s corresponds to Observation~\ref{obs2}.
%
%
%
%
\end{proof}

\begin{obs}\label{obs5}
Let $V=\operatorname{diag}(T,\ldots,T,D,\ldots,D)$ be a sign pattern of size $(6t+2d)$. ($T$ occurs $t$ times, $D$ occurs $d$ times.) If $d\geqslant5$, then $V$ is spectrally arbitrary.
\end{obs}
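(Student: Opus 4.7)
The plan is to exploit the block-diagonal structure of $V$: the characteristic polynomial of $V$ is the product of the characteristic polynomials of its blocks, so realizing an arbitrary monic real $f$ of degree $6t+2d$ reduces to factoring $f = g_1 \cdots g_t h_1 \cdots h_d$ with each $g_i$ of degree $6$ realizable by $T$ and each $h_j$ of degree $2$ realizable by $D$. Since $D$ is spectrally arbitrary by assumption, the $h_j$ part is free, and the whole task collapses to extracting $t$ successive degree-$6$ divisors from $f$, each realizable by $T$.

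I would proceed by induction on $t$. The base case $t=0$ is immediate: split $f$ into $d$ real monic quadratics and assign one to each $D$ block. For the inductive step with $t\geqslant 1$ and $d\geqslant 5$, write $f$ as a product of $3t + d\geqslant 8$ real monic quadratics and then mimic the argument in the proof of Observation~\ref{obs4}. By re-pairing real linear factors we can arrange that at most one quadratic carries a negative constant term; this leaves at least seven quadratics with nonnegative constant term, and pigeonhole on the three possible signs of their linear coefficients yields three in a single sign category (all positive, all negative, or all zero). Observation~\ref{obs2} or Observation~\ref{obs3} then realizes the product of those three as the characteristic polynomial of a matrix with pattern $T$. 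The quotient of $f$ by this degree-$6$ factor has degree $6(t-1)+2d$, so the inductive hypothesis supplies a realization of it by $\operatorname{diag}(T,\ldots,T,D,\ldots,D)$ with $t-1$ copies of $T$; placing the new $T$-block alongside gives the required realization of $f$.

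The only delicate point I anticipate is verifying that the re-pairing step really leaves at most one quadratic with negative constant term in the generality required here. Complex conjugate pairs always contribute quadratics with strictly positive constant term, so the analysis reduces to the real roots: pair positives with positives and negatives with negatives wherever possible, then use the parity of $\deg f$ (which is even) to ensure that any leftover linear factors consist of exactly one positive and one negative root, which combine into a single quadratic of negative constant term. This is the same move implicit in the proof of Observation~\ref{obs4}, so no new ideas are needed; the bookkeeping simply has to be carried out for arbitrary even degree rather than just degree~$16$.
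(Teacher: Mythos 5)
Your proof is correct and follows essentially the same route as the paper: induction on $t$ with base case $t=0$, and at each inductive step a pigeonhole/re-pairing argument peels off a degree-$6$ factor realizable by $T$ via Observation~\ref{obs2} or~\ref{obs3}. The only cosmetic difference is that the paper simply invokes Observation~\ref{obs4} (whose proof clearly extends to any even degree $\geqslant 16$), while you re-run that argument inline; you could also note that zero real roots pose no obstacle in the re-pairing step since they combine with anything to give a nonnegative constant term, but this does not affect correctness.
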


\begin{proof}
The result is true for $t=0$ because $D$ is spectrally arbitrary (see also Proposition~2.1 in~\cite{alot}). Now let $t>0$ and let $f$ be a monic real polynomial of degree $6t+2d$ (which is at least $16$). We apply Observation~\ref{obs4} and find a polynomial $h$ that divides $f$ and arises the characteristic polynomial of a matrix $M_1$ with sign pattern $T$. Using the inductive assumption, we find a matrix $M_2$ with characteristic polynomial $f/h$ and sign pattern that has the same form as $V$ but with one $T$-block removed. Now the matrix $\operatorname{diag}(M_1,M_2)$ has sign pattern $V$ and characteristic polynomial $f$.
\end{proof}

\begin{obs}\label{obs14}
For any family $\nu=(n_+,n_-,n_0,n_i)$ of nonnegative integers such that $n_++n_-+n_0+2n_i=8$, there is a family $\mu \leqslant \nu$ and a matrix $M$ with sign pattern $T$ and refined inertia $\mu$.
\end{obs}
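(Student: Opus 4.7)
The plan is to exhibit, for each $\nu=(n_+,n_-,n_0,n_i)$ of weight~$8$, an explicit $\mu\leq\nu$ of weight~$6$ that is realizable as the refined inertia of some matrix with sign pattern~$T$. The two tools are Observation~\ref{obs2}, which realizes every inertia of the form $(k,k,2\ell,m)$ with $k+\ell+m=3$ via the polynomial $(t^2+b)(t^2+c)(t^2+d)$, and Observation~\ref{obs3}, which realizes any characteristic polynomial satisfying $a_3/a_5>0$.

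I would split the argument on $n_i$. When $n_i\geq3$, take $\mu=(0,0,0,3)$ via Observation~\ref{obs2}. When $n_i=2$ (so $n_++n_-+n_0=4$), at least one of $(1,1,0,2)$, $(0,0,2,2)$, $(2,0,0,2)$, $(0,2,0,2)$ is dominated by $\nu$: the first two come from Observation~\ref{obs2}, while $(2,0,0,2)$ arises from Observation~\ref{obs3} applied to $(t-p_1)(t-p_2)(t^2+q_1)(t^2+q_2)$, whose coefficients $a_5=-(p_1+p_2)$ and $a_3=-(p_1+p_2)(q_1+q_2)$ have the same sign. The cases $n_i=1$ and $n_i=0$ follow the same pattern: whenever $\nu$ dominates one of the symmetric Observation~\ref{obs2} inertias, such as $(2,2,0,1)$, $(1,1,2,1)$, $(2,2,2,0)$, $(3,3,0,0)$, or $(0,0,6,0)$, we are done; otherwise we fall back on Observation~\ref{obs3} with a polynomial whose non-zero real roots share one sign, for instance $(4,0,0,1)$ via $(t^2+q)\prod_{i=1}^4(t-p_i)$ or $(3,0,3,0)$ via $t^3\prod_{i=1}^3(t-p_i)$, where $a_5$ and $a_3$ automatically share a sign.

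The main obstacle is to steer $\mu$ away from the weight-$6$ inertias that are \emph{not} realizable by $T$; by Observation~\ref{obs11}, these include $(1,0,5,0)$, $(2,0,4,0)$, and their reflections $(0,1,5,0)$, $(0,2,4,0)$, since for them the forced polynomial $t^{n_0}\prod(t\mp p_i)$ has $a_5\neq 0$ but $a_3=0$. Whenever $\nu$ would push $\mu$ into such a bad pattern, I would substitute an alternative realizable one: $(0,0,6,0)$ when $n_0\geq 6$, $(3,0,3,0)$ when $n_+\geq 3$ and $n_0\geq 3$, or replace $(1,0,5,0)$ by $(1,0,3,1)$ when $n_i\geq 1$ (the latter realized by $t^3(t-p)(t^2+q)$ with $a_5=-p$ and $a_3=-pq$). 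A finite check of the residual cases confirms that such a $\mu$ exists for every $\nu$ of weight~$8$.
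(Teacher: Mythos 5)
Your argument is sound in outline and does reach the conclusion, but it takes a genuinely different and substantially more laborious route than the paper. You restrict the target inertia $\mu$ to those with $m_+=m_-$ (realized via Observation~\ref{obs2}) or to \emph{one-sided} inertias $(m_+,0,m_0,m_i)$ or $(0,m_-,m_0,m_i)$ (realized via Observation~\ref{obs3}), identify the bad one-sided inertias $(1,0,5,0)$, $(2,0,4,0)$ and reflections where $a_5\neq0$ but $a_3=0$, and then try to show every weight-$8$ family $\nu$ dominates a good $\mu$. That plan does succeed, and your identification of the bad cases is exactly right: with all nonzero-real-part roots sharing one sign, writing the polynomial as $t^{m_0}\prod(t^2+q_j)\prod(t\mp p_i)$ one gets $a_5=\mp e_1$ and $a_3=\mp(e_1 d_1+e_3)$, so $a_3/a_5>0$ unless $m_i=0$ and $m_+\leq2$ (resp.\ $m_-\leq2$). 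However, the ``finite check of the residual cases'' is not trivial --- it amounts to verifying roughly two dozen families $\nu$ with $n_i\leq1$, and your sketch does not carry this out.

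The paper avoids the enumeration entirely with a cleaner uniform trick. It splits on whether $n_0+2n_i\geq6$ (use Observation~\ref{obs2}) or $n_++n_-\geq3$. In the latter case it picks any $\mu\leq\nu$ of weight~$6$ with $m_++m_-\geq3$, subtracts off one of $(3,0,0,0)$, $(0,3,0,0)$, $(2,1,0,0)$, $(1,2,0,0)$ to get a weight-$3$ remainder realized by a cubic $h$, and then multiplies $h$ by $(t-N)^3$, $(t+N)^3$, $(t+3N)(t-N)^2$, or $(t-3N)(t+N)^2$ with $N$ large. For large $N$ the $N$-terms dominate both $a_5$ and $a_3$ and force $a_3/a_5>0$ automatically, so \emph{every} $\mu$ with $m_++m_-\geq3$ is realizable, including the mixed-sign ones like $(4,2,0,0)$ that your restriction to one-sided inertias avoids handling directly. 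This buys a short, uniform argument with no residual case analysis; your approach, if completed, would be more explicit about precisely which weight-$6$ inertias $T$ realizes, but at the cost of an enumeration you have only gestured at.
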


\begin{proof}
If $n_0+2n_i\geqslant 6$, then we are done because of Observation~\ref{obs2}. Otherwise, we have $n_++n_-\geqslant3$, and it suffices to check that any tuple $\mu=(m_+,m_-,m_0,m_i)$  with $m_++m_-\geqslant3$ arises as a refined inertia of a matrix with sign pattern $T$.

Now we see that one of the tuples $\mu-(3,0,0,0)$, $\mu-(0,3,0,0)$, $\mu-(2,1,0,0)$, $\mu-(1,2,0,0)$ consists of nonnegative integers, and this tuple corresponds to some monic polynomial $h$ of degree $3$. We note that, for a sufficiently large positive $N$, the polynomials $(t-N)^3 h$, $(t+N)^3 h$,  $(t+3N) (t-N)^2 h$, $(t-3N) (t+N)^2 h$ satisfy the condition as in Observation~\ref{obs3}. As said above, one of these polynomials has $\mu$ as a refined inertia. 
%
\end{proof}

\begin{obs}\label{obs15}
The sign pattern $\operatorname{diag}(T,D)$ allows arbitrary refined inertias.
\end{obs}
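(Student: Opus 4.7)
The plan is to combine Observation~\ref{obs14} with the spectral arbitrariness of $D$. Concretely, given a target refined inertia $\nu=(n_+,n_-,n_0,n_i)$ with $n_++n_-+n_0+2n_i=8$, I would first invoke Observation~\ref{obs14} to obtain a tuple $\mu=(m_+,m_-,m_0,m_i)\leqslant\nu$ and a matrix $M_1$ of sign pattern $T$ whose refined inertia equals $\mu$. Since $M_1$ is a $6\times 6$ matrix, one has $m_++m_-+m_0+2m_i=6$, hence the componentwise difference $\nu-\mu=(n_+-m_+,n_--m_-,n_0-m_0,n_i-m_i)$ is a tuple of nonnegative integers summing (with the last coordinate counted twice) to $2$.

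Next I would realize $\nu-\mu$ as the refined inertia of some $2\times 2$ matrix $M_2$ with sign pattern $D$. Because $D$ is spectrally arbitrary, every monic real polynomial of degree $2$ arises as the characteristic polynomial of such an $M_2$; in particular, we can prescribe its eigenvalues to match any refined inertia on two eigenvalues. For example, if $\nu-\mu=(2,0,0,0)$ we realize the polynomial $(t-1)(t-2)$, if $\nu-\mu=(0,0,0,1)$ we realize $t^2+1$, and so on for the finitely many possibilities.

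Finally, the block diagonal matrix $\operatorname{diag}(M_1,M_2)$ has sign pattern $\operatorname{diag}(T,D)$ and its spectrum is the disjoint union of the spectra of $M_1$ and $M_2$, so its refined inertia is $\mu+(\nu-\mu)=\nu$, as required.

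There is essentially no obstacle here beyond bookkeeping: the only thing to verify is that Observation~\ref{obs14} provides $\mu$ with $\mu\leqslant\nu$ coordinatewise (so that the residue $\nu-\mu$ makes sense as a refined inertia), and that $D$ can absorb any two-eigenvalue refined inertia, which is immediate from spectral arbitrariness. The substantive content of the result is already contained in Observation~\ref{obs14}; the present statement is the formal combination of that observation with the known properties of $D$.
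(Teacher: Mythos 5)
Your proposal is correct and is essentially the same as the paper's own argument: apply Observation~\ref{obs14} to obtain $\mu\leqslant\nu$ realized by a matrix $M_1$ with sign pattern $T$, realize $\nu-\mu$ by a matrix $M_2$ with sign pattern $D$ (using that $D$ is spectrally arbitrary), and take $\operatorname{diag}(M_1,M_2)$. The extra bookkeeping you spell out (that $\nu-\mu$ is a nonnegative tuple summing with multiplicity to $2$) is implicit in the paper but adds no new idea.
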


\begin{proof}
Let $\nu=(n_+,n_-,n_0,n_i)$ be a family of nonnegative integers such that $n_++n_-+n_0+2n_i=8$. By Observation~\ref{obs14}, there is a family $\mu\leqslant\nu$ and a matrix $M_1$ with sign pattern $T$ and refined inertia $\mu$. Since $D$ is spectrally arbitrary, it allows a matrix $M_2$ with refined inertia $\nu-\mu$, and then the matrix $\operatorname{diag}(M_1,M_2)$ has sign pattern $\operatorname{diag}(T,D)$ and refined inertia $\nu$.
\end{proof}

Now we put all the observations together and conclude the paper.

\begin{thr}
Let $T,T',D$ be as above. Then

\noindent (1) the sign pattern $S=\operatorname{diag}(T,D,D,D,D,D)$ is spectrally arbitrary, but its superpattern $S'=\operatorname{diag}(T',D,D,D,D,D)$ is not spectrally arbitrary;

\noindent (2) $\operatorname{diag}(T,D)$ allows arbitrary refined inertias but is not spectrally arbitrary;

\noindent (3) there is a sign pattern $U$ such that $\operatorname{diag}(U,U)$ is spectrally arbitrary but $U$ is not.
\end{thr}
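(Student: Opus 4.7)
For part (1), spectral arbitrariness of $S = \operatorname{diag}(T, D, D, D, D, D)$ is immediate from Observation~\ref{obs5} with $t = 1$ and $d = 5$. To prove the failure of $S'$, I would test the polynomial $f = t^{16}$: in any realization by a block-diagonal matrix with sign pattern $S'$, the block characteristic polynomials are monic of the appropriate degrees and multiply to $t^{16}$, so each is a power of $t$. In particular the $T'$-block would be nilpotent, contradicting Observation~\ref{obs1}.

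For part (2), the two assertions follow directly from Observation~\ref{obs15} (arbitrary refined inertias of $\operatorname{diag}(T, D)$) and Observation~\ref{obs12} ($\operatorname{diag}(T, D)$ is not spectrally arbitrary).

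For part (3), I would take $U = \operatorname{diag}(T, D, D, D)$, so that $\operatorname{diag}(U, U)$ equals $\operatorname{diag}(T, T, D, D, D, D, D, D)$, which is covered by Observation~\ref{obs5} with $t = 2$ and $d = 6$ and is therefore spectrally arbitrary. The main obstacle is to show that $U$ itself is not spectrally arbitrary, i.e.\ to exhibit a monic real polynomial $f$ of degree $12$ none of whose degree-$6$ real divisors arises as the characteristic polynomial of a matrix with sign pattern $T$. The proof of Observation~\ref{obs11} actually establishes the sharper identity $a_3/a_5 = -r_{56} r_{65}$, which is strictly positive whenever $a_5 \neq 0$, since entry $(5,6)$ of $T$ is $+$ and entry $(6,5)$ is $-$. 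It therefore suffices to design $f$ so that every degree-$6$ real divisor $g$ has $a_3(g)\, a_5(g) < 0$, or has exactly one of $a_3(g), a_5(g)$ equal to zero. My plan is to start from the four-factor example $f_0 = (t^2+t+1)(t^2-t+2)(t^2+1)(t^2-1)$ of Observation~\ref{obs12} and to adjoin two further irreducible quadratic factors whose traces and determinants are tuned so that every new degree-$6$ divisor introduced by this augmentation lands in the same forbidden regime, concluding by a combinatorial case check over the finitely many divisors.
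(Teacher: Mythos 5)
Parts (1) and (2) of your proposal match the paper's own argument: testing $f=t^{16}$ for $S'$ is precisely the paper's observation that $S'$ admits no nilpotent realization (via Observation~\ref{obs1}), Observation~\ref{obs5} gives the spectral arbitrariness of $S$, and part (2) is Observations~\ref{obs12} and~\ref{obs15} verbatim.

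For part (3) your route is genuinely different and, as written, has a real gap. The paper's argument is a short trichotomy that never constructs an explicit non-spectrally-arbitrary $U$: set $U_1=\operatorname{diag}(T,D)$, $U_2=\operatorname{diag}(U_1,U_1)$, $U_3=\operatorname{diag}(U_2,U_2)$. By Observation~\ref{obs12}, $U_1$ is not spectrally arbitrary, and by Observation~\ref{obs5}, $\operatorname{diag}(U_3,U_3)$ (eight $T$-blocks, eight $D$-blocks, so $d=8\geqslant 5$) is. Hence the last sign pattern in the chain $U_1,U_2,U_3$ that fails to be spectrally arbitrary works as $U$, and one need not determine which case actually occurs. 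Your plan instead fixes $U=\operatorname{diag}(T,D,D,D)$ and proposes to exhibit a monic real degree-$12$ polynomial $f$ none of whose degree-$6$ real divisors is realized by $T$. Your extracted identity $a_3/a_5=-r_{56}r_{65}>0$ is correct and gives a useful necessary condition, but you have neither produced $f$ nor carried out the case analysis over its degree-$6$ divisors; and the reduction itself is not complete, since Observation~\ref{obs11} only rules out divisors with exactly one of $a_3,a_5$ vanishing, while divisors with $a_3=a_5=0$ but $a_1\neq 0$ fall outside both Observation~\ref{obs2} and Observation~\ref{obs3} (these are only sufficient conditions for realizability) and would need a separate argument. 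So your part (3) is a plan with a genuine gap, whereas the paper's nonconstructive trichotomy closes the proof cleanly using only what is already established.
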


\begin{proof}
The definition of $T$ and $T'$ immediately shows that $S'$ is a superpattern of $S$. By Observation~\ref{obs1}, $S'$ does not allow a nilpotent matrix, so it is not spectrally arbitrary. Observation~\ref{obs5} shows that $S$ is spectrally arbitrary and completes the proof of~(1).

The sign pattern $\operatorname{diag}(T,D)$ is not spectrally arbitrary by Observation~\ref{obs12}, and it allows arbitrary refined inertias by Observation~\ref{obs15}. This proves~(2).

Finally, let $U_1=\operatorname{diag}(T,D)$. If $U_2=\operatorname{diag}(U_1,U_1)$ is spectrally arbitrary, then the proof of~(3) is complete. Otherwise, we define $U_3=\operatorname{diag}(U_2,U_2)$, and we are done if $U_3$ is spectrally arbitrary. If this is still not the case, we complete the proof because $\operatorname{diag}(U_3,U_3)$ is spectrally arbitrary by Observation~\ref{obs5}.
\end{proof}

\end{document}